\newtheorem{thm}{Theorem}[section]
\newtheorem{lem}[thm]{Lemma}
\theoremstyle{remark}
\numberwithin{equation}{section}
\begin{document}

\title[On a Zolt\'{a}n Boros' problem]{On a Zolt\'{a}n Boros' problem connected with polynomial-like iterative equations}
\author[S. Draga, J. Morawiec]{Szymon Draga, Janusz Morawiec}
\email[S. Draga]{szymon.draga@gmail.com}
\email[J. Morawiec]{morawiec@math.us.edu.pl}
\date{}
\address{Institute of Mathematics, University of Silesia, Bankowa 14, 40-007 Katowice, Poland}

\begin{abstract}
We determine all continuous solutions $g\colon I\to I$ of the  polynomial-like iterative equation $g^3(x)=3g(x)-2x$, where $I\subset\mathbb R$ is an interval. In particular, we obtain an answer to a problem posed by Zolt\'{a}n Boros (during the Fiftieth International Symposium on Functional Equations, 2012) of determining all continuous functions $f\colon (0,+\infty)\to (0,+\infty)$ satisfying $f^3(x)=\frac{[f(x)]^3}{x^2}$.
\end{abstract}

\keywords{polynomial-like iterative equations, iterates, continuous solutions}
\subjclass[2010]{Primary 39B22; Secondary 39B12, 26A18}

\maketitle

\section{Introduction}
Given an interval $I\subset\mathbb R$ we are interested in determining all continuous functions ${g\colon I\to I}$ satisfying
\begin{equation}\label{E}
g^3(x)=3g(x)-2x.
\end{equation}
Here and throughout the paper $g^n$ denotes the $n$-th iterate of a given self-mapping $g\colon I\to I$; i.e., $g^0={\rm id}_I$ and $g^k=g\circ g^{k-1}$ for all integers $k\geq 1$.

There are two reasons to find all continuous solutions  $g\colon I\to I$ of equation \eqref{E}.
The first one is to answer a problem posed by Zolt\'{a}n Boros (see \cite{B}) of determining all continuous functions $f\colon (0,+\infty)\to (0,+\infty)$ satisfying 
\begin{equation}\label{be}
f^3(x)=\frac{[f(x)]^3}{x^2}.
\end{equation}

The second reason is that equation \eqref{E} belongs to the class of important and intensively investigated iterative functional equations; i.e., the class of polynomial-like iterative equations of the form
\begin{equation}\label{ple}
\sum_{n=0}^Na_ng^n(x)=F(x),
\end{equation}
where $a_n$'s are given real numbers, $F\colon I\to I$ is a given function and $g\colon I\to I$ is the unknown function.  For the theory of equation \eqref{ple} and its generalizations we refer the readers to books \cite{KCG1990,T1981}, surveys \cite{BJ2001,ZYZ1995}, and some recent papers \cite{CZ2009, G2014, GZ2013, LZ2013, MZ2000, T2010, TZ2004, X2004, XZ2007a, XZ2007b, ZNX2006, ZXZ2013}. 

Equation \eqref{ple} represents a linear dependence of iterates of the unknown function and looks like a linear ordinary differential equation with constant coefficients, expressing the linear dependence of derivatives of  the unknown function. The difference between \pagebreak these two equations is that  linear ordinary differential equations with constant coefficients have a complete theory for finding their solutions, in contrast, even to a very interesting subclass of homogeneous polynomial-like iterative equations of the form
\begin{equation}\label{sple}
\sum_{n=0}^Na_ng^n(x)=0.
\end{equation}
The difficulties in solving equation \eqref{sple}, and hence also equation \eqref{ple}, comes from the fact that the iteration operator $g\to g^n$ is non-linear. 

The problem of finding all continuous solutions of equation \eqref{sple} for a given positive integer $N$ seems to be very difficult.  It is completely solved in \cite{N1974} (see also \cite{MZ1998}) for $N=2$, but  it is still  open even in the case where $N=3$ (see \cite{M1989}). It turns out that the nature of continuous solutions of equation \eqref{ple} depends deeply on the behavior of complex roots of its characteristic equation 
\begin{equation}\label{che}
\sum_{n=0}^N\alpha_nr^n=0.
\end{equation}
This characteristic equation is motivated by the Euler's idea for differential equations; it is obtained by putting $g(x)=rx$ into \eqref{sple} to determine all its linear solutions. There are some results describing all continuous solutions of equation \eqref{sple} with $N\geq 3$ in very particular cases where the complex roots of equation \eqref{che} fulfill special conditions (see \cite{YZ2004, ZG, ZZ2010}).  Note that the characteristic equation of equation \eqref{E} is of the form
$$
r^3-3r+2=0,
$$ 
and it has two roots: $r_1=1$ of multiplicity~$2$ and $r_2=-2$ of multiplicity~$1$. Therefore, none of known results can be used to determine all continuous solutions $g\colon I\to I$ of equation \eqref{E}.

\section{Preliminary}
It is easy to check that the identity function, defined on an arbitrary set $A\subset\mathbb R$, is a continuous solution of equation \eqref{E}. Thus equation \eqref{E} has the unique solution $g\colon I\to I$ in the case where $I\subset\mathbb R$ is an interval degenerated to a single point. Therefore, from now on, fix a non-degenerated interval $I\subset\mathbb R$;  open or closed or closed on one side, possible infinite.

\begin{lem}\label{lem11}
Assume that $g\colon I\to I$  is a continuous solution of equation \eqref{E}. Then $g$ is strictly monotone.
Moreover, if $I\neq\mathbb R$, then $g$ is strictly increasing.
\end{lem}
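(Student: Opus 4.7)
The plan is to establish injectivity of $g$ via a one-line substitution into \eqref{E}, invoke the standard theorem that a continuous injection on an interval is strictly monotone, and then, under the additional hypothesis $I\neq\mathbb R$, rule out the strictly decreasing case by feeding orbits of $g$ back into \eqref{E}. The key observation is that along any orbit, \eqref{E} becomes a linear recurrence whose explicit general solution clashes with the existence of a finite endpoint of $I$ whenever $g$ is assumed strictly decreasing. For the first part, suppose $g(x_1)=g(x_2)$; then $g^3(x_1)=g^3(x_2)$, and \eqref{E} gives $3g(x_1)-2x_1=3g(x_2)-2x_2$, so $x_1=x_2$. A continuous injection on an interval is strictly monotone.

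For the ``moreover'' statement, fix $x_0\in I$ and set $x_n=g^n(x_0)$. Applying \eqref{E} at $x_n$ yields the linear recurrence $x_{n+3}=3x_{n+1}-2x_n$, whose characteristic polynomial is $r^3-3r+2=(r-1)^2(r+2)$; hence
\[
x_n = A + B n + C(-2)^n, \qquad n\geq 0,
\]
for constants $A,B,C$ depending on $x_0$. Suppose, toward contradiction, that $g$ is strictly decreasing. Since $I\neq\mathbb R$, the interval $I$ has a finite endpoint, so every orbit $(x_n)$ is bounded above or below; the oscillation of $(-2)^n$ rules out $C\neq 0$ (either $(x_{2n})$ or $(x_{2n+1})$ would diverge to $\pm\infty$), so $C=0$ and $x_n=A+Bn$. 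If $I$ is bounded on both sides, then necessarily $B=0$, so $x_0$ is a fixed point; as $x_0$ was arbitrary, $g=\mathrm{id}_I$, contradicting strict decrease. If $I$ is a one-sided ray, say bounded below and unbounded above, then $B\geq 0$; the case $B>0$ gives $x_0<x_1<x_2$, whereas $g$ strictly decreasing together with $x_0<x_1$ forces $g(x_0)>g(x_1)$, i.e.\ $x_1>x_2$, a contradiction. Hence $B=0$ and again $g=\mathrm{id}_I$. The case of $I$ bounded above and unbounded below is symmetric.

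The main obstacle will be in recognizing the correct tool: that equation \eqref{E} propagates along orbits into a classical linear recurrence, and that the algebraic structure of its general solution --- linear growth from the double root $r=1$ and exponential oscillation from $r=-2$ --- is precisely what converts the topological condition $I\neq\mathbb R$ into a contradiction with monotone decrease. The rest of the argument is bookkeeping among the one-sided and two-sided boundedness cases of~$I$.
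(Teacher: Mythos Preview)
Your proof is correct. The first part (injectivity and hence strict monotonicity) is identical to the paper's argument. For the second part, however, you take a genuinely different route.

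The paper argues by taking limits of equation~\eqref{E} at the endpoints of $I$: if $g$ were strictly decreasing and, say, $\sup I=+\infty$, then $\lim_{x\to+\infty}(3g(x)-2x)=-\infty$ while $g^3$ stays above $\inf I>-\infty$; the case of a bounded $I$ is handled by a similar one-line limit inequality at $b=\sup I$ using $c=\inf g(I)$. This is short and uses nothing beyond monotone limits.

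Your argument instead exploits the orbit recurrence $x_{n+3}=3x_{n+1}-2x_n$ and its explicit solution $A+Bn+C(-2)^n$, then eliminates $C$ and $B$ from boundedness and the alternating nature of a decreasing orbit. This is slightly heavier machinery for the lemma at hand, but it foreshadows exactly the technique the paper later develops in Lemmas~\ref{lem21}--\ref{lem24} and in the proof of Theorem~\ref{thm33} (where the analogous recurrence for $G=g^{-1}$ is solved). So what you lose in brevity you gain in thematic unity with the rest of the paper.
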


\begin{proof}
Fix $x,y\in I$ and assume that $g(x)=g(y)$. Then by \eqref{E} we obtain
$$
x=\frac{3g(x)-g^3(x)}{2}=\frac{3g(y)-g^3(y)}{2}=y.
$$ 
Since $g$ is continuous, it follows that it is strictly monotone.

Assume now that $I\neq\mathbb R$ and suppose that, contrary to our claim, $g$ is strictly decreasing.
Put $a=\inf I$ and $b=\sup I$. If $b=+\infty$, then 
$$
-\infty<a\leq\lim_{x\to b}g^3(x)=\lim_{x\to b}\big(3g(x)-2x\big)=-\infty,
$$ 
a contradiction. Similarly, if $a=-\infty$, then 
$$
+\infty>b\geq\lim_{x\to a}g^3(x)=\lim_{x\to a}\big(3g(x)-2x\big)=+\infty,
$$
a contradiction. Therefore, we have proved that $a,b\in\mathbb R$. Put $c=\inf g(I)$. Since $I$ is non-degenerated and $g$ is strictly decreasing, we have $a\leq c<b$. Moreover, by \eqref{E} we have
$$
c\leq\lim_{x\to b}g^3(x)=\lim_{x\to b}\big(3g(x)-2x\big)=3c-2b,
$$
a contradiction.
\end{proof}

\begin{lem}\label{lem12}
Every continuous solution $g\colon\mathbb R\to\mathbb R$ of equation \eqref{E} maps bijectively $\mathbb R$ onto $\mathbb R$.
\end{lem}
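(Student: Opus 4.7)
The plan is to exploit Lemma~\ref{lem11} for injectivity and then analyse the one-sided limits of $g$ and $g^3$ to rule out that $g(\mathbb R)$ is a proper subset of $\mathbb R$.

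First, since Lemma~\ref{lem11} gives that $g$ is strictly monotone on $\mathbb R$, injectivity is immediate; moreover, $g(\mathbb R)$ is an open interval $(\alpha,\beta)$ with $-\infty\le\alpha<\beta\le+\infty$ (openness follows from the fact that a strictly monotone continuous function on $\mathbb R$ cannot attain its supremum or infimum). So I only need to prove $\alpha=-\infty$ and $\beta=+\infty$. Here I must be careful not to use the last sentence of Lemma~\ref{lem11}, because for $I=\mathbb R$ a strictly decreasing solution is genuinely possible (for instance $g(x)=-2x$), and that case will have to be handled separately.

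The key observation in both monotonicity cases is that $g^3$ is again strictly monotone with $g^3(\mathbb R)\subset g(\mathbb R)=(\alpha,\beta)$, so the values $\sup g^3(\mathbb R)$ and $\inf g^3(\mathbb R)$ are trapped inside $[\alpha,\beta]$; at the same time, equation~\eqref{E} lets me compute the limits $\lim_{x\to\pm\infty}g^3(x)=\lim_{x\to\pm\infty}\bigl(3g(x)-2x\bigr)$ explicitly in terms of $\alpha$ and $\beta$. Suppose $g$ is strictly increasing and $\beta<+\infty$; then $g(x)\to\beta$ as $x\to+\infty$, so $3g(x)-2x\to -\infty$, forcing $\sup g^3(\mathbb R)=-\infty$, which is absurd. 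Symmetrically, $\alpha>-\infty$ would give $\inf g^3(\mathbb R)=+\infty$. If instead $g$ is strictly decreasing, then $g(x)\to\beta$ as $x\to -\infty$ and $g(x)\to\alpha$ as $x\to+\infty$; assuming $\beta<+\infty$ yields $\lim_{x\to-\infty}(3g(x)-2x)=+\infty$ while $\sup g^3(\mathbb R)\le\beta<+\infty$, again a contradiction, and $\alpha>-\infty$ is ruled out analogously. Hence $g(\mathbb R)=\mathbb R$.

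The main difficulty I expect is purely organisational: keeping track of which of $\alpha,\beta$ is the supremum or infimum of $g$ under each monotonicity, and verifying that the limit computation from~\eqref{E} really gives the claimed $\pm\infty$ regardless of whether the other endpoint of $(\alpha,\beta)$ is finite. No deep new idea is needed beyond the combination of strict monotonicity, the trivial inclusion $g^3(\mathbb R)\subset g(\mathbb R)$, and a direct passage to the limit in the functional equation.
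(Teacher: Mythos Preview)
Your proof is correct and follows essentially the same route as the paper: invoke Lemma~\ref{lem11} for injectivity, then pass to the limit in \eqref{E} to show that the one-sided limits of $g$ cannot be finite. The only difference is cosmetic: the paper avoids your increasing/decreasing case-split by noting that if $L=\lim_{x\to+\infty}g(x)\in\mathbb R$ then continuity of $g$ gives $g^{2}(L)=\lim_{x\to+\infty}g^{3}(x)=\lim_{x\to+\infty}\bigl(3g(x)-2x\bigr)=-\infty$, a contradiction that does not depend on the direction of monotonicity.
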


\begin{proof}
According to Lemma \ref{lem11} it is enough to show that $\lim_{x\to +\infty}g(x)\in\{-\infty,+\infty\}$ and  $\lim_{x\to -\infty}g(x)\in\{-\infty,+\infty\}$.

Suppose, towards a contradiction, that $\lim_{x\to +\infty}g(x)\in\mathbb R$. By the continuity of $g$ we have
$$
g^2\big(\lim_{x\to +\infty}g(x)\big)=\lim_{x\to +\infty}g^3(x)=\lim_{x\to +\infty}\big(3g(x)-2x\big)=-\infty,
$$
a contradiction. In the same manner we obtain $\lim_{x\to -\infty}g(x)\in\{-\infty,+\infty\}$.
\end{proof}

\begin{lem}\label{lem21}
Define sequences $(a_n)_{n\in\mathbb N_0}$, $(b_n)_{n\in\mathbb N_0}$ and $(c_n)_{n\in\mathbb N_0}$ by putting 
$$
a_0=0,\hspace{3ex}b_0=3,\hspace{3ex}c_0=-2
$$
and
$$
a_{n+1}=b_n,\hspace{3ex}b_{n+1}=3a_n+c_n,\hspace{3ex}c_{n+1}=-2a_n
\hspace{3ex}\hbox{for every }n\in\mathbb N_0.
$$
Furthermore, assume that $g\colon I\to I$ solves \eqref{E}. Then
\begin{equation}\label{en}
g^{n+3}(x)=a_ng^2(x)+b_ng(x)+c_nx
\end{equation}
for all $n\in\mathbb N_0$ and $x\in I$.

Moreover, for every $n\in\mathbb N_0$, the following assertions hold:
\begin{itemize}
\item[\rm (i)] $a_n+b_n+c_n=1$;
\item[\rm (ii)] $b_{n+1}-b_n=\sum_{k=0}^{n+3}(-2)^k$;
\item[\rm (iii)] $b_n=\frac{1}{9}[(-2)^{n+4}+3n+11]$.
\end{itemize}
\end{lem}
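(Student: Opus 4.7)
The plan is to prove the identity \eqref{en} by induction on $n$, derive (i) as an immediate consequence, and then bootstrap from the three-term defining recurrence of $(a_n,b_n,c_n)$ to an auxiliary one-variable recurrence for $(b_n)$ that pins down (ii) and, by telescoping, (iii).

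For \eqref{en} I would induct on $n$. The base case $n=0$ is equation \eqref{E} itself. For the inductive step, assuming $g^{n+3}(x)=a_n g^2(x)+b_n g(x)+c_n x$, I substitute $g(x)$ in place of $x$ to get
\[
g^{n+4}(x)=a_n g^3(x)+b_n g^2(x)+c_n g(x),
\]
then replace $g^3(x)$ using \eqref{E} and collect terms: $g^{n+4}(x)=b_n g^2(x)+(3a_n+c_n)g(x)+(-2a_n)x$, in which the coefficients are exactly $a_{n+1}$, $b_{n+1}$, $c_{n+1}$.

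Property (i) admits two routes. A slick one is to observe that $\mathrm{id}_I$ solves \eqref{E}, so \eqref{en} applied to $g=\mathrm{id}_I$ yields $x=(a_n+b_n+c_n)x$ for every $x\in I$; since $I$ is non-degenerate, this forces $a_n+b_n+c_n=1$. Equivalently, a one-line induction gives $a_{n+1}+b_{n+1}+c_{n+1}=b_n+3a_n+c_n-2a_n=a_n+b_n+c_n$.

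For (ii) and (iii) the key observation is that eliminating $a_n$ and $c_n$ via $a_n=b_{n-1}$ and $c_n=-2b_{n-2}$ gives the one-variable three-term recurrence
\[
b_{n+1}=3b_{n-1}-2b_{n-2}\qquad(n\geq 2).
\]
Writing (ii) in the equivalent second-difference form $b_{n+1}-2b_n+b_{n-1}=(-2)^{n+3}$ and applying this recurrence collapses the left-hand side to
\[
-2(b_n-2b_{n-1}+b_{n-2})=-2\cdot(-2)^{n+2}=(-2)^{n+3},
\]
so an induction starting from the direct computations $b_1-b_0=-5=\sum_{k=0}^{3}(-2)^k$ and $b_2-2b_1+b_0=16=(-2)^4$ completes (ii). Finally, (iii) follows by telescoping: $b_n=b_0+\sum_{k=0}^{n-1}(b_{k+1}-b_k)$; each summand is a finite geometric series by (ii), and routine simplification using $(-2)^{k+4}=16\cdot(-2)^k$ and a second geometric sum yields the closed form $\tfrac{1}{9}[(-2)^{n+4}+3n+11]$. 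The main obstacle is purely bookkeeping: choosing the second-difference form of (ii) so that the single-variable three-term recurrence reduces the induction step to a clean multiplication by $-2$.
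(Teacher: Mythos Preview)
Your argument is correct and, for \eqref{en}, (i), and (iii), proceeds essentially as in the paper (the paper states only the inductive version of (i), but your $g=\mathrm{id}_I$ observation is a pleasant alternative). The only genuine difference is in (ii): the paper uses (i) to obtain directly a recurrence for the first differences, namely $b_{n+1}-b_n=-2(b_n-b_{n-1})+1$, and then checks that $\sum_{k=0}^{n+3}(-2)^k$ satisfies it; you instead extract the single-sequence recurrence $b_{n+1}=3b_{n-1}-2b_{n-2}$ and induct on the second differences $b_{n+1}-2b_n+b_{n-1}=(-2)^{n+3}$ before recovering (ii) from the extra base value $b_1-b_0=-5$. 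Both routes are short; the paper's is marginally more economical since it needs only one base case and feeds straight into the inductive verification of (iii), while yours has the appeal of isolating $(b_n)$ entirely from $(a_n)$ and $(c_n)$.
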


\begin{proof} 
The proof is by induction on $n\in\mathbb N_0$.

To prove the main part of the lemma if is enough to observe that putting $g(x)$ instead of $x$ in \eqref{en} and making use of  \eqref{E} we obtain 
$$
g^{n+4}(x)=a_ng^3(x)+b_ng^2(x)+c_ng(x)=b_ng^2(x)+(3a_n+c_n)g(x)-2a_nx
$$ 
for every $x\in I$.

(i)  Since $a_0+b_0+c_0=1$ and $a_{n+1}+b_{n+1}+c_{n+1}=a_n+b_n+c_n$, the assertion follows.

(ii) From assertion (i)  we have 
$$
b_{n+1}-b_n=1-a_{n+1}-c_{n+1}-b_n=-2[b_n-b_{n-1}]+1.
$$
Now we need only to observe that $b_1-b_0=-5=\sum_{k=0}^{3}(-2)^k$.

(iii) Clearly, $b_0=3=\frac{1}{9}[(-2)^4+11]$. Moreover, by assertion (ii) we have
$$
b_{n+1}=(b_{n+1}-b_n)+b_n=\frac{1-(-2)^{n+4}}{3}+b_n=\frac{1}{9}[(-2)^{n+5}+3(n+1)+11],
$$
which completes the proof.
\end{proof}

From now on $(a_n)_{n\in\mathbb N_0}$, $(b_n)_{n\in\mathbb N_0}$ and $(c_n)_{n\in\mathbb N_0}$ will stand for the sequences defined in the foregoing lemma.

\begin{lem}\label{lem22}
Assume that $g\colon I\to I$ is a continuous solution of equation \eqref{E}. Then 
\begin{equation}\label{22}
\lim_{n\to\infty}\frac{g^{n+3}(x)}{b_n}=-\frac{1}{2}g^2(x)+g(x)-\frac{1}{2}x \hspace{3ex}\hbox{ for every }x\in I.
\end{equation} 
\end{lem}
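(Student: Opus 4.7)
The plan is to exploit the explicit formula for $g^{n+3}(x)$ provided by Lemma \ref{lem21}, namely
$$g^{n+3}(x)=a_n g^2(x)+b_n g(x)+c_n x,$$
and reduce the statement to computing two scalar limits: $a_n/b_n\to -1/2$ and $c_n/b_n\to -1/2$. The coefficient of $g(x)$ after dividing by $b_n$ is automatically $1$, which matches the middle term in the claimed limit.

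First I would check that $b_n\neq 0$ for all sufficiently large $n$, so that the division is legitimate. This follows directly from assertion (iii): since $b_n=\tfrac{1}{9}[(-2)^{n+4}+3n+11]$, the term $(-2)^{n+4}$ dominates, and in fact $b_n\to\infty$ in absolute value. Then I would write
$$\frac{g^{n+3}(x)}{b_n}=\frac{a_n}{b_n}\, g^2(x)+g(x)+\frac{c_n}{b_n}\, x,$$
valid for $n$ large enough.

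Next, using the recurrences $a_{n+1}=b_n$ and $c_{n+1}=-2a_n=-2b_{n-1}$ from Lemma \ref{lem21}, one has for $n\geq 2$
$$\frac{a_n}{b_n}=\frac{b_{n-1}}{b_n},\qquad \frac{c_n}{b_n}=\frac{-2\,b_{n-2}}{b_n}.$$
Invoking the closed form (iii), the ratio $b_{n-1}/b_n$ tends to $(-2)^{n+3}/(-2)^{n+4}=-1/2$, while $-2\,b_{n-2}/b_n$ tends to $-2\cdot(-2)^{n+2}/(-2)^{n+4}=-1/2$. Substituting these two limits into the above display yields exactly \eqref{22}.

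I do not foresee any real obstacle: the proof is essentially a routine asymptotic analysis of the linear recurrence, all the non-trivial work having been done already in Lemma \ref{lem21}. The only point requiring minor care is handling the small-$n$ bookkeeping for the recurrences $a_{n+1}=b_n$ and $c_{n+1}=-2a_n$ (so that $a_n$ and $c_n$ are expressed through $b_{n-1}$ and $b_{n-2}$), together with noting that the exponential term $(-2)^{n+4}$ in (iii) dwarfs the linear term $3n+11$ as $n\to\infty$, so the ratios depend only on the geometric part.
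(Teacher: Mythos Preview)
Your proposal is correct and follows essentially the same approach as the paper: divide the identity $g^{n+3}(x)=a_n g^2(x)+b_n g(x)+c_n x$ by $b_n$ and use the closed form for $b_n$ from Lemma~\ref{lem21}(iii) to conclude $a_n/b_n\to -1/2$ and $c_n/b_n\to -1/2$. The paper's proof is terser but the argument is the same.
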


\begin{proof} 
From Lemma \ref{lem21} we conclude that $\lim_{n\to\infty}\frac{1}{b_n}=0$ and $\lim_{n\to\infty}\frac{a_n}{b_n}=\lim_{n\to\infty}\frac{c_n}{b_n}=-\frac{1}{2}$. Dividing both sides of \eqref{en} by $b_n$ and next tending with $n$ to infinity we obtain \eqref{22}.
\end{proof}

\begin{lem}\label{lem23}
Assume that $g\colon I\to I$ is a continuous solution of equation \eqref{E}. If for every $x\in I$ the sequence $(g^n(x))_{n\in\mathbb N}$ converges to a real number, then 
\begin{equation}\label{23}
g(x)=x \hspace{3ex}\hbox{ for every }x\in I.
\end{equation}
\end{lem}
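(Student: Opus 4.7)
The plan is to combine the asymptotic formula in Lemma~\ref{lem22} with the hypothesis to force a second-order linear recurrence on the iterates, and then solve that recurrence explicitly.

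First I would fix $x\in I$. By assumption, $(g^n(x))_{n\in\mathbb N}$ converges in $\mathbb R$, so in particular $(g^{n+3}(x))_{n\in\mathbb N}$ is bounded. From Lemma~\ref{lem21}(iii) we see that $|b_n|\to\infty$, and hence $g^{n+3}(x)/b_n\to 0$. Comparing with \eqref{22} gives
\begin{equation*}
-\tfrac{1}{2}g^2(x)+g(x)-\tfrac{1}{2}x=0,
\end{equation*}
that is, $g^2(x)=2g(x)-x$ for every $x\in I$.

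Next I would replace $x$ successively by $g(x),g^2(x),\ldots$ in this identity to obtain
\begin{equation*}
g^{n+2}(x)=2g^{n+1}(x)-g^n(x)\qquad\text{for all }n\in\mathbb N_0,
\end{equation*}
i.e.\ $g^{n+1}(x)-g^n(x)=g^n(x)-g^{n-1}(x)$. A straightforward induction then yields
\begin{equation*}
g^n(x)=x+n\bigl(g(x)-x\bigr)\qquad\text{for every }n\in\mathbb N_0.
\end{equation*}

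Finally, since $(g^n(x))_{n\in\mathbb N}$ is assumed to converge to a real number, the sequence on the right-hand side must converge as well, which is possible only when $g(x)-x=0$. This gives \eqref{23}. I do not expect a serious obstacle here: the only subtle point is justifying that $g^{n+3}(x)/b_n\to 0$, which is immediate once one notes that $|b_n|\to\infty$ via part~(iii) of Lemma~\ref{lem21}, while the rest of the argument is the classical solution of the constant-coefficient recurrence $y_{n+2}=2y_{n+1}-y_n$.
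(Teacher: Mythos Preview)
Your proof is correct and follows essentially the same route as the paper: use Lemma~\ref{lem22} together with $|b_n|\to\infty$ to deduce $g^2(x)-2g(x)+x=0$, iterate to get the arithmetic-progression formula for $g^n(x)$, and conclude from convergence that $g(x)=x$. The only cosmetic difference is that the paper records the recurrence as $g^{n+1}(x)-g^n(x)=g(x)-x$ and passes to the limit in the differences, whereas you write the closed form $g^n(x)=x+n(g(x)-x)$; these are equivalent.
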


\begin{proof} 
As in the previous proof we obtain $\lim_{n\to\infty}\frac{1}{b_n}=0$. Then Lemma \ref{lem22} implies
\begin{equation}\label{24}
g^2(x)-g(x)=g(x)-x \hspace{3ex}\hbox{ for every }x\in I.
\end{equation}  
By a simple induction we obtain
\begin{equation}\label{25}
g^{n+1}(x)-g^n(x)=g(x)-x \hspace{3ex}\hbox{ for all }n\in\mathbb N\hbox{ and }x\in I.
\end{equation}  
Finally, tending with $n$ to infinity in \eqref{25} we come to \eqref{23}.
\end{proof}

\begin{lem}\label{lem24}
Assume that $g\colon I\to I$ is a continuous solution of equation \eqref{E}.
\begin{itemize}
\item[\rm (i)] If for some $x\in I$ we have $g^2(x)-2g(x)+x\neq 0$, then
\begin{equation}\label{26}
\lim_{n\to\infty}\frac{g^{n+4}(x)}{g^{n+3}(x)}=-2.
\end{equation}
\item[\rm (ii)] If $g$ is increasing, then \eqref{25} holds.
\end{itemize}
\end{lem}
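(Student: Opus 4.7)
The plan is to derive (i) directly from Lemma~\ref{lem22} together with the asymptotics of $b_n$, and then to reduce (ii) to (i) by exploiting that the iterates of an increasing self-map form a monotone sequence.

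For part (i), I would first read off from the closed form $b_n=\frac{1}{9}[(-2)^{n+4}+3n+11]$ in Lemma~\ref{lem21}(iii) that the geometric term dominates, so $b_{n+1}/b_n\to -2$. Under the hypothesis $g^2(x)-2g(x)+x\neq 0$, Lemma~\ref{lem22} gives
$$
\lim_{n\to\infty}\frac{g^{n+3}(x)}{b_n}=-\tfrac{1}{2}\bigl(g^2(x)-2g(x)+x\bigr)\neq 0;
$$
in particular $g^{n+3}(x)\neq 0$ for all sufficiently large $n$, and the analogous limit (with $n$ replaced by $n+1$) holds for $g^{n+4}(x)/b_{n+1}$. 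Writing
$$
\frac{g^{n+4}(x)}{g^{n+3}(x)}=\frac{g^{n+4}(x)/b_{n+1}}{g^{n+3}(x)/b_n}\cdot\frac{b_{n+1}}{b_n},
$$
the first factor tends to $1$ and the second to $-2$, yielding \eqref{26}.

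For part (ii), assume $g$ is increasing. My aim is to show that $g^2(x)-2g(x)+x=0$ for every $x\in I$; once this is established, the short induction from the proof of Lemma~\ref{lem23} (apply the vanishing identity at $g^{n-1}(x)$ in place of $x$) immediately yields \eqref{25}. Suppose for contradiction that $g^2(x)-2g(x)+x\neq 0$ for some $x\in I$. Since $g$ is increasing, the orbit $(g^n(x))_{n\in\mathbb N_0}$ is monotonic in $n$ (non-decreasing if $g(x)\geq x$, non-increasing otherwise), so it either tends to a finite limit or to $\pm\infty$; in either case its terms eventually lie on one side of $0$. Consequently the ratios $g^{n+4}(x)/g^{n+3}(x)$ are non-negative for all sufficiently large $n$, which contradicts part (i).

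The main obstacle is tightening the eventual-sign argument in (ii): one has to rule out the degenerate possibility that the monotonic orbit is eventually equal to $0$. But in that case $g^{n+3}(x)/b_n=0$ for large $n$, and Lemma~\ref{lem22} then forces $g^2(x)-2g(x)+x=0$, contradicting the standing assumption without even appealing to part~(i). Apart from this minor case split, the argument is purely algebraic.
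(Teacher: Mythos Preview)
Your proof is correct and follows the same route as the paper: for (i) you factor $g^{n+4}(x)/g^{n+3}(x)$ through $b_{n+1}/b_n$ via Lemma~\ref{lem22}, and for (ii) you use monotonicity of the orbit to rule out the ratio limit $-2$, forcing \eqref{24} and hence \eqref{25}. Your treatment is in fact slightly more careful than the paper's, which simply asserts that \eqref{26} ``cannot be satisfied'' in either case without isolating the eventually-zero-orbit possibility that you dispose of separately.
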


\begin{proof} 
(i)  Lemmas \ref{lem22} and \ref{lem21} yield
$$
\lim_{n\to\infty}\frac{g^{n+4}(x)}{g^{n+3}(x)}=\lim_{n\to\infty}\frac{g^{n+4}(x)}{b_{n+1}} \frac{b_n}{g^{n+3}(x)}\frac{b_{n+1}}{b_n}=\lim_{n\to\infty}\frac{b_{n+1}}{b_n}=-2.
$$

(ii) If $g$ is increasing, then the sequence $(g^n(x))_{n\in\mathbb N}$ is monotone for every $x\in I$. Hence $\lim_{n\to\infty}g^n(x)$ exists and it equals either a real number or $\pm\infty$. In both the cases \eqref{26} cannot be satisfied. Then by assertion (i) we see that \eqref{24} holds. In consequence \eqref{25} holds.
\end{proof}

\section{Main results}
We are now in a position to find all continuous solutions $g\colon I\to I$ of equation \eqref{E}. We will do it in three steps.

\begin{thm}\label{thm31}
Assume that $I$ is bounded. If $g\colon I\to I$ is a continuous solution of equation \eqref{E}, then \eqref{23} holds.
\end{thm}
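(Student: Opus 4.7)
The plan is to combine the monotonicity dichotomy from Lemma \ref{lem11} with the arithmetic-progression identity \eqref{25} obtained in Lemma \ref{lem24}.

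First I would observe that since $I$ is bounded, in particular $I\neq\mathbb R$, so Lemma \ref{lem11} forces $g$ to be strictly increasing. This immediately puts us in the hypothesis of Lemma \ref{lem24}(ii), which gives $g^{n+1}(x)-g^n(x)=g(x)-x$ for all $n\in\mathbb N$ and $x\in I$.

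Next I would iterate this difference relation by telescoping to deduce
\[
g^n(x)=x+n\bigl(g(x)-x\bigr) \qquad\text{for every } n\in\mathbb N \text{ and } x\in I.
\]
Because $g\colon I\to I$, the left-hand side lies in $I$ for every $n$, so the sequence $(g^n(x))_{n\in\mathbb N}$ is bounded. The right-hand side, however, is an arithmetic progression with common difference $g(x)-x$, which can be bounded only when that difference vanishes. Hence $g(x)=x$ for every $x\in I$, which is \eqref{23}.

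I do not expect a serious obstacle here: once the monotonicity is pinned down as \emph{increasing}, Lemma \ref{lem24}(ii) does the real work, and boundedness of $I$ is exactly what kills any nonzero slope $g(x)-x$. The only point requiring a moment of care is verifying that the hypothesis $I\neq\mathbb R$ is available, but this is automatic from boundedness of $I$.
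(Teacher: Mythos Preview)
Your proof is correct and essentially the same as the paper's: both use Lemma~\ref{lem11} to see that $g$ is strictly increasing, and then combine the resulting identity \eqref{25} with boundedness of $I$ to force $g(x)=x$. The only cosmetic difference is that the paper packages the final step via Lemma~\ref{lem23} (the bounded monotone orbits converge, hence $g$ is the identity), whereas you cite Lemma~\ref{lem24}(ii) and telescope explicitly.
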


\begin{proof}  
By Lemma \ref{lem11} we see that $g$ is strictly increasing. This jointly with boundedness of $I$ yields that for every $x\in I$ the sequence $(g^n(x))_{n\in\mathbb N}$ converges to a real number. Lemma \ref{lem23} completes the proof.
\end{proof}

\begin{thm}\label{thm32}
Assume that $I$ is a half-line. If $g\colon I\to I$ is a continuous solution of equation \eqref{E}, then there exist $c\in\mathbb R$ such that 
\begin{equation}\label{31}
g(x)=x+c \hspace{3ex}\hbox{ for every }x\in I.
\end{equation}
Moreover, $c\leq 0$ in the case where $\inf I=-\infty$ and $c\geq 0$ in the case where $\sup I=+\infty$.
\end{thm}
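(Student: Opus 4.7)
The plan is first to invoke Lemmas \ref{lem11} and \ref{lem24}(ii) to obtain an explicit formula for the iterates $g^n$, then to extract rigidity properties of the shift $h(x):=g(x)-x$ from that formula, and finally to combine monotonicity and continuity of $h$ with the identity $h\circ g=h$ to squeeze $h$ to a constant.

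Since $I\neq\mathbb R$, Lemma \ref{lem11} gives $g$ strictly increasing, and Lemma \ref{lem24}(ii) gives \eqref{25}, which telescopes to
$$
g^n(x)=x+nh(x)\qquad(n\in\mathbb N_0,\ x\in I).
$$
From this formula I would read off three properties of $h$. First, since $g^n(x)\in I$ for every $n$, the ray $\{x+nh(x):n\in\mathbb N_0\}$ must lie in the half-line $I$, which forces $h\geq 0$ when $\inf I>-\infty$ and $h\leq 0$ when $\sup I<+\infty$. Second, the strict monotonicity of $g^n$ yields $h(y)-h(x)>-(y-x)/n$ whenever $x<y$ in $I$ and all $n\in\mathbb N$; letting $n\to\infty$ shows $h$ is non-decreasing. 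Third, evaluating the iterate formula at $g(x)$ and comparing with the formula for $g^{n+1}(x)$ gives $h(g(x))=h(x)$, i.e.\ $h(x+h(x))=h(x)$.

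The heart of the argument is to conclude from these three properties that $h$ is constant. I would assume without loss of generality that $I$ is a right half-line, so $h\geq 0$. If $h\not\equiv 0$, pick $x_0\in I$ with $c:=h(x_0)>0$. Monotonicity together with $h(x_0)=h(x_0+c)=c$ squeezes $h\equiv c$ on $[x_0,x_0+c]$; iterating the squeeze gives $h\equiv c$ on $[x_0,+\infty)\cap I$. Let $x^{\ast}=\inf\{x\in I:h(x)=c\}$; if $x^{\ast}=\inf I$ we are done. Otherwise, for $x<x^{\ast}$ in $I$, monotonicity and the definition of $x^{\ast}$ yield $h(x)<c$, while continuity of $h$ forces $h(x)\to c$ as $x\to x^{\ast-}$. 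Applying the squeeze at such an $x$ gives $h\equiv h(x)$ on $[x,x+h(x)]$; but as $x\to x^{\ast-}$ the length $h(x)$ stays bounded below by $c/2$ while $x^{\ast}-x\to 0$, so eventually this interval contains $x^{\ast}$, forcing $h(x^{\ast})=h(x)<c$, a contradiction. Hence $h\equiv c$ on $I$. The left half-line case is symmetric.

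This yields $g(x)=x+c$, with the sign of $c$ matching which side of $I$ extends to infinity. I expect the main obstacle to be the final squeeze-plus-continuity argument: one has to combine the three properties of $h$ carefully to exclude a jump of $h$ at the boundary of the set $\{h=c\}$, and everything else is essentially a streamlined use of the results already established in Section~2.
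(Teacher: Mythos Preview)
Your proof is correct and follows a genuinely different path from the paper's after the common opening (Lemma~\ref{lem11}, Lemma~\ref{lem24}(ii), and the telescoped formula $g^n(x)=x+nh(x)$). The paper proceeds by a difference-quotient argument: it rewrites the iterate formula as
\[
\frac{g^{n+3}(y)-g^{n+3}(x)}{y-x}=(n+3)\frac{g(y)-g(x)}{y-x}-(n+2),
\]
observes that for $x<y<g^k(x)$ the left side is bounded by $k\,h(x)/(y-x)$ uniformly in $n$, divides by $n+3$, and passes to the limit to force $(g(y)-g(x))/(y-x)=1$. Your route instead isolates three structural properties of $h$ (sign, monotonicity, and the invariance $h\circ g=h$) and runs a propagation argument on the level set $\{h=c\}$. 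Each approach has its advantages: the paper's limit computation is short once the difference-quotient identity is written down, while your argument is more self-contained---it avoids the separate reduction to the fixed-point-free case via Theorem~\ref{thm31} and Lemma~\ref{lem23}, since $h\ge 0$ drops out immediately from $x+nh(x)\in I$, and it makes the role of the functional equation $h(x+h(x))=h(x)$ explicit. One cosmetic point: in the final squeeze, you should note explicitly that $h(x^{\ast})=c$ (which follows because $\{h=c\}$ is closed by continuity and $x^{\ast}>\inf I$ lies in $I$); you use this when invoking $h(x)\le h(x^{\ast})=c$ and when deriving the contradiction $h(x^{\ast})=h(x)<c$.
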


\begin{proof} 
From Lemma \ref{lem11} we see that $g$ is strictly increasing. By replacing the function $g$ by the function $\overline{g}\colon -I\to -I$ given by $\overline{g}(x)=-g(-x)$ if necessary, we can assume that $\sup I=+\infty$. 

By Theorem \ref{thm31} we can also assume that $g$ has no fixed point in int$I$. Indeed, if  $g(x_0)=x_0$ for some $x_0\in{\rm int}I$, then $g(x)=x$ for every $x\in (-\infty,x_0]\cap I$, by Theorem \ref{thm31}, and $g$ solves \eqref{E} on $(x_0,+\infty)$.

First, we prove that $g(x)>x$ for every $x\in{\rm int}I$. Suppose the contrary; i.e., $g(x)<x$ for every $x\in{\rm int}I$. Then for every $x\in I$ the sequence $(g^n(x))_{n\in\mathbb N}$ converges to $\inf I$. Since $\inf I$ is a real number, it follows, by Lemma \ref{lem23}, that \eqref{23} holds; a contradiction.

By Lemma \ref{lem24} we see that \eqref{25} holds. From \eqref{en}, Lemma \ref{lem21} and \eqref{24} we obtain
$$
g^{n+3}(x)=b_{n-1}(2g(x)-x)+b_ng(x)-2b_{n-2}x=(n+3)g(x)-(n+2)x
$$
for all $n\in\mathbb N_0$ and $x\in I$. Hence
\begin{equation}\label{32}
\frac{g^{n+3}(y)-g^{n+3}(x)}{y-x}=(n+3)\frac{g(y)-g(x)}{y-x}-(n+2)
\end{equation}
for all $n\in\mathbb N_0$ and $x,y\in I$ with $x\neq y$.

Fix $x\in {\rm int}I$ and choose $y\in I$ and $k\in\mathbb N$ such that $x<y$ and $y<g^k(x)$; it is possible because $\lim_{k\to\infty}g^k(x)=+\infty$. 
Then by the monotonicity of $g$ and \eqref{25} we obtain
$$
0<\frac{g^{n+3}(y)-g^{n+3}(x)}{y-x}\leq\frac{g^{n+k+3}(x)-g^{n+3}(x)}{y-x}=\frac{k(g(x)-x)}{y-x},
$$
and hence $\lim_{n\to\infty}\frac{g^{n+3}(y)-g^{n+3}(x)}{(n+3)(y-x)}=0$. Thus, dividing both sides of \eqref{32} by $(n+3)$ and next tending with $n$ to infinity, we obtain
$$
\frac{g(y)-g(x)}{y-x}=1.
$$
This jointly with continuity of $g$ gives \eqref{31} with $c=g(y)-y>0$.

In conclusion, we have proved that \eqref{31} holds with some $c>0$ in the case where $g$ has no fixed point in int$I$; otherwise \eqref{31} holds with $c=0$.
\end{proof}

\begin{thm}\label{thm33}
Assume that $I=\mathbb R$. If $g\colon I\to I$ is a continuous solution of equation \eqref{E}, then there exists $c\in\mathbb R$ such that either \eqref{31} holds or
\begin{equation}\label{33}
g(x)=-2x+c \hspace{3ex}\hbox{ for every }x\in I.
\end{equation}
\end{thm}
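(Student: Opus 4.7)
My plan is to split on the monotonicity of $g$ given by Lemma \ref{lem11}; the increasing case will produce \eqref{31} and the decreasing case will produce \eqref{33}.

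I would first treat $g$ strictly increasing. If $g$ has a fixed point $x_0$, then $g$ maps each of the half-lines $(-\infty,x_0]$ and $[x_0,+\infty)$ into itself and continues to solve \eqref{E} there; Theorem \ref{thm32} applied on each piece, together with $g(x_0)=x_0$, collapses the additive constants to $0$ and gives $g=\mathrm{id}$. If $g$ has no fixed point, then $g-\mathrm{id}$ has constant sign, and after the substitution $\overline{g}(x)=-g(-x)$ (which preserves \eqref{E}) I may assume $g(x)>x$ for every $x$, so $g^n(x)\to+\infty$. Now Lemma \ref{lem24}(ii) gives \eqref{24} and hence $g^{n+3}(x)=(n+3)g(x)-(n+2)x$; the endgame of Theorem \ref{thm32}---choose $k$ with $y<g^k(x)$ for a prescribed pair $x<y$, divide the two-point version of this formula by $n+3$, and pass to the limit---goes through unchanged to give \eqref{31}.

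The main content is the strictly decreasing case. By Lemma \ref{lem12}, $g$ is a bijection, and since $g-\mathrm{id}$ is continuous and strictly decreasing with limits $\pm\infty$ at $\mp\infty$, $g$ has a unique fixed point $x_0$. Equation \eqref{E} is invariant under the translation conjugacy $g\mapsto g(\cdot+x_0)-x_0$, so I may assume $x_0=0$. The decisive observation is that for every $x$ the bi-infinite orbit $y_n:=g^n(x)$, $n\in\mathbb{Z}$ (well defined because $g$ is invertible), satisfies the linear recurrence $y_{n+3}=3y_{n+1}-2y_n$ obtained by applying \eqref{E} to $g^n(x)$. Its characteristic polynomial factors as $(r-1)^2(r+2)$, so there exist $A(x),B(x),C(x)\in\mathbb{R}$ with
\[
y_n=A(x)+B(x)n+C(x)(-2)^n\qquad\text{for every }n\in\mathbb{Z}.
\]

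The endgame then uses the backward orbit. Because $g^{-1}$ is itself strictly decreasing and fixes $0$, for $x\neq0$ the signs of $g^{-m}(x)$ alternate with $m$. On the other hand, setting $n=-m$ in the closed form gives $y_{-m}=A(x)-B(x)m+C(x)(-1/2)^m$, and the last summand tends to $0$, so $y_{-m}$ is asymptotic to the linear (or constant) function $A(x)-B(x)m$, which is eventually of one sign. Alternation therefore forces $B(x)=0$, and then $y_{-m}\to A(x)$ forces $A(x)=0$ as well. Consequently $y_n=C(x)(-2)^n$ for every $n$; reading off $n=0,1$ yields $C(x)=x$ and $g(x)=-2x$, and un-translating by $x_0$ gives \eqref{33} with $c=3x_0$. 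I expect the most delicate step to be making the sign-alternation argument airtight: checking that the exponentially decaying term $C(x)(-1/2)^m$ cannot rescue $y_{-m}$ from the eventually-constant sign dictated by the linear part $A(x)-B(x)m$, after which the rest is routine bookkeeping.
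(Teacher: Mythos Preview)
Your proposal is correct. In the increasing case your argument is essentially the paper's; the only cosmetic difference is that the paper, once $g(x)>x$ for all $x$, simply restricts $g$ to $[y,+\infty)$, applies Theorem~\ref{thm32} there, and lets $y\to-\infty$, rather than rerunning the endgame of that proof on $\mathbb R$.

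The decreasing case is where the two approaches genuinely diverge. The paper passes to $G=g^{-1}$ and studies the \emph{forward} orbit $x_n=G^n(x)$, which satisfies $x_{n+3}=\tfrac32 x_{n+2}-\tfrac12 x_n$ with characteristic roots $1,1,-\tfrac12$, so $x_n=An+B+C(-\tfrac12)^n$. Anti-monotonicity of $(x_n)$ (the \emph{differences} $x_{n+1}-x_n$ alternate in sign) forces only $A=0$; this gives $2G^2-G-\mathrm{id}=0$, equivalently $g^2+g=2\,\mathrm{id}$, and the paper then finishes by invoking Nabeya's classification of continuous solutions of that second-order equation. You instead locate the unique fixed point, translate it to $0$, and look at the \emph{backward} orbit $y_{-m}=g^{-m}(x)$; since $g^{-1}$ is a decreasing bijection fixing $0$, the \emph{values} $y_{-m}$ themselves alternate in sign for $x\neq 0$, which is strong enough to kill both the linear and the constant coefficient in $y_{-m}=A-Bm+C(-\tfrac12)^m$ and to read off $g(x)=-2x$ directly. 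Your route is more self-contained (no appeal to \cite{N1974}); the paper's route avoids having to find and normalise the fixed point but pays for that by outsourcing the last step. The delicacy you flag is real but mild: since $C(-\tfrac12)^m=o(1)$, it cannot overturn the eventually constant sign of $A-Bm$ when $(A,B)\neq(0,0)$.
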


\begin{proof} 
From Lemma \ref{lem12} we see that either $g$ is an increasing bijection from $\mathbb R$ onto $\mathbb R$ or it is a decreasing bijection from $\mathbb R$ onto $\mathbb R$.

First, we consider the case where $g$ is an increasing bijection. 

If there exists $x_0\in\mathbb R$ such that $g(x_0)=x_0$, then both the functions $g|_{(-\infty,x_0]}$ and $g|_{[x_0,+\infty)}$ satisfy equation \eqref{E}. Then applying Theorem \ref{thm32} we conclude that \eqref{31} holds with $c=0$. 

If $g(x)\neq x$ for every $x\in\mathbb R$, then either $g(x)>x$ for every $x\in\mathbb R$ or $g(x)<x$ for every $x\in\mathbb R$. 
Assume that $g(x)>x$ for every $x\in\mathbb R$. Fix $y\in\mathbb R$ and observe that the function $g|_{[y,+\infty)}$ satisfies equation \eqref{E}. By Theorem \ref{thm32} there exists $c>0$ such that $g(x)=x+c$ for every $x\in[y,+\infty)$. Letting with $y$ to $-\infty$ we conclude that \eqref{31} holds. 
In the same way we can prove that \eqref{31} holds with some $c<0$ in the case where $g(x)<x$ for every $x\in\mathbb R$.

Secondly, we consider the case where $g$ is a decreasing bijection. 

By Lemma \ref{lem12} we see that the formula $G=g^{-1}$ defines a strictly decreasing bijection $G\colon\mathbb R\to\mathbb R$. Putting $g^{-3}(x)$ in place of $x$ in \eqref{E} we conclude that
\begin{equation}\label{34}
G^3(x)=\frac{3}{2}G^2(x)-\frac{1}{2}x
\end{equation}
for every $x\in\mathbb R$.

Fix $x\in\mathbb R$ and define a sequence $(x_n)_{n\in\mathbb N_0}$ putting
$$
x_0=x\hspace{3ex}\hbox{and}\hspace{3ex}x_n=G(x_{n-1})\hspace{2ex}\hbox{for every }n\in\mathbb N.
$$
By \eqref{34} we have
\begin{equation}\label{35}
x_{n+3}=\frac{3}{2}x_{n+2}-\frac{1}{2}x_n
\end{equation}
for every $n\in\mathbb N_0$. 
It is clear that we can find unique real constants $A$, $B$ and $C$ (depending on $x$)  such that
\begin{equation}\label{36}
x_n=A\cdot n+B+C\cdot\left(-\frac{1}{2}\right)^n
\end{equation}
for $n\in\{0,1,2\}$. According to \eqref{35} we conclude, by a simple induction, that \eqref{36} holds
for every $n\in\mathbb N_0$. Since $G$ is strictly decreasing, it follows that the sequence $(x_n)_{n\in\mathbb N_0}$ is anti-monotone; i.e., the expression $(-1)^n(x_{n+1}-x_n)$ does not change its sign. This forces $A=0$, and hence
$2G^2(x)-G(x)-x=2x_2-x_1-x_0=2B+\frac{1}{2}C-B+\frac{1}{2}C-B-C=0$. 

In conclusion, we have proved that 
\begin{equation}\label{37}
2G^2(x)-G(x)-x=0
\end{equation}
for every $x\in\mathbb R$. Putting $g^2(x)$ in place of $x$ in \eqref{37} we obtain
$$
2x-g(x)-g^2(x)=0
$$
for every $x\in\mathbb R$. Finally, applying Theorem 9 from \cite{N1974} we conclude that \eqref{33} holds.
\end{proof}

\section{A Zolt\'{a}n Boros\rq{} problem}
In this section we answer the question posed by Zolt\'{a}n Boros of determining all continuous solutions $f\colon (0,+\infty)\to (0,+\infty)$ of equation \eqref{be}. In fact, we determine all continuous solutions $f\colon J\to J$ of equation \eqref{be}, where $J$ is a subinterval of the half-line $(0,+\infty)$; open or closed or closed on one side, possible infinite or degenerated to a single point.

The proof of the next lemma is very easy, so we omit it. 

\begin{lem}\label{lem41}
If $J\subset\mathbb (0,+\infty)$ is an interval and $f\colon J\to J$ is a solution of equation \eqref{be}, then the formula $g=\log\circ f\circ \exp$ defines a function acting from $\log J$ into itself such that \eqref{E} holds for every $x\in \log J$.

Conversely, if $I\subset\mathbb R$ is an interval and $g\colon I\to I$ is a solution of equation \eqref{E}, then the formula $f=\exp\circ g\circ\log$ defines a function acting from $\exp I$ into itself such that \eqref{be} holds for every $x\in \exp I$.
\end{lem}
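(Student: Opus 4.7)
The plan is to exploit the fact that $\log\colon(0,+\infty)\to\mathbb R$ and $\exp\colon\mathbb R\to(0,+\infty)$ are mutually inverse homeomorphisms, so conjugation by $\log$ converts the multiplicative structure appearing in \eqref{be} into the additive structure appearing in \eqref{E}, and, crucially, it commutes with iteration. Moreover, since $\log$ is a strictly increasing bijection, $\log J$ is an interval and, because $f(J)\subset J$, the induced map $g=\log\circ f\circ\exp$ sends $\log J$ into itself; the converse direction is analogous.

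The first step is to verify by induction on $n\in\mathbb N_0$ the identity
\[
g^n=\log\circ f^n\circ\exp \qquad\text{on }\log J.
\]
The base case $n=0$ is just $\log\circ\exp=\mathrm{id}_{\mathbb R}$. For the inductive step, $g^{n+1}=g\circ g^n=(\log\circ f\circ\exp)\circ(\log\circ f^n\circ\exp)$, and the middle pair $\exp\circ\log$ collapses to the identity on $(0,+\infty)$ (which contains $f^n(J)$), giving $g^{n+1}=\log\circ f^{n+1}\circ\exp$.

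The second step is a direct calculation. Fix $x\in\log J$ and set $y=e^x\in J$. Using the identity from the previous step with $n=3$ together with \eqref{be}, one has
\[
g^3(x)=\log f^3(y)=\log\frac{[f(y)]^3}{y^2}=3\log f(y)-2\log y=3g(x)-2x,
\]
which is precisely \eqref{E}. The converse direction is symmetric: the analogous induction yields $f^n=\exp\circ g^n\circ\log$ on $\exp I$, and then, for $x\in\exp I$,
\[
f^3(x)=\exp\bigl(g^3(\log x)\bigr)=\exp\bigl(3g(\log x)-2\log x\bigr)=\frac{[f(x)]^3}{x^2}.
\]

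I do not anticipate any real obstacle; the only bookkeeping is confirming that $\log J$ and $\exp I$ are intervals and that the conjugated maps are self-mappings, both of which follow immediately from $\log$ and $\exp$ being strictly monotone bijections between $(0,+\infty)$ and $\mathbb R$.
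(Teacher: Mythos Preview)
Your argument is correct: the conjugation identity $g^n=\log\circ f^n\circ\exp$ (and its inverse) is exactly what is needed, and your verification of the domains and of \eqref{E} and \eqref{be} is sound. The paper itself omits the proof of this lemma as ``very easy,'' so there is no alternative approach to compare against; your write-up would serve perfectly well as the supplied details.
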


Lemma \ref{lem41} and Theorems \ref{thm31}--\ref{thm33} give the following answer to the question of Zolt\'{a}n Boros.

\begin{thm}\label{thm42}
Assume $J\subset(0,+\infty)$ is an interval and let  $f\colon J\to J$ be a continuous solution of equation \eqref{be}.
\begin{itemize}
\item[\rm (i)] If $J$ is bounded and $0\not\in{\rm cl }J$, then $f(x)=x$ for every $x\in J$.
\item[\rm (ii)] If $J$ is bounded and $0\in{\rm cl }J$, then there exists $c\in(0,1]$ such that 
\begin{equation}\label{41}
f(x)=cx\hspace{3ex}\hbox{for every }x\in J.
\end{equation}
\item[\rm (iii)] If $J$ is unbounded and $0\not\in{\rm cl }J$, then there exists $c\in[1,+\infty)$ such that \eqref{41} holds.
\item[\rm (iv)] If $J=(0,+\infty)$, then there exists $c\in(0,+\infty)$ such that either \eqref{41} holds or
$$
f(x)=\frac{c}{x^2}\hspace{3ex}\hbox{ for every }x\in (0,+\infty).
$$
\end{itemize}
\end{thm}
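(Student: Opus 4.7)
The plan is to use Lemma~\ref{lem41} as a bridge: any continuous solution $f\colon J\to J$ of \eqref{be} gives rise to a continuous solution $g:=\log\circ f\circ\exp$ of \eqref{E} on the interval $I:=\log J$, and conversely $f=\exp\circ g\circ\log$. Since $\log$ is an order-preserving homeomorphism of $(0,+\infty)$ onto $\mathbb R$, the topological shape of $I$ is determined by that of $J$: $J$ bounded with $0\notin\operatorname{cl}J$ corresponds to $I$ bounded; $J$ bounded with $0\in\operatorname{cl}J$ corresponds to a half-line with $\inf I=-\infty$; $J$ unbounded with $0\notin\operatorname{cl}J$ corresponds to a half-line with $\sup I=+\infty$; and $J=(0,+\infty)$ corresponds to $I=\mathbb R$. (The degenerate case where $J$ reduces to a single point is trivial, and is subsumed by case (i) or (ii).)

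In each case I would invoke the corresponding result from Section~3 and push the classification of $g$ back through the exponential. Case (i) follows from Theorem~\ref{thm31}, which forces $g(y)=y$; hence $f(x)=\exp(\log x)=x$. For case (ii), Theorem~\ref{thm32} applied with $\inf I=-\infty$ produces $g(y)=y+c$ with $c\le 0$; setting $c':=e^c$ gives $f(x)=e^{c}\cdot x=c'x$ with $c'\in(0,1]$. Case (iii) is identical except that now $\sup I=+\infty$, so Theorem~\ref{thm32} yields $c\ge 0$ and therefore $c'=e^c\in[1,+\infty)$. For case (iv), where $I=\mathbb R$, Theorem~\ref{thm33} gives either $g(y)=y+c$, whence $f(x)=c'x$, or $g(y)=-2y+c$, whence
\[
f(x)=\exp(c-2\log x)=\frac{c'}{x^2},
\]
where $c':=e^c$ ranges over all of $(0,+\infty)$.

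The argument is essentially a routine translation, so there is no substantive obstacle. The only points that require a moment's care are verifying that $\log J$ really has the claimed shape in each case (so that the correct Section~3 theorem is applicable), and checking that the sign restriction on the additive constant $c$ in Theorem~\ref{thm32} transforms, under exponentiation, into the multiplicative ranges $(0,1]$ in case (ii) and $[1,+\infty)$ in case (iii).
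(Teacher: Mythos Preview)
Your proposal is correct and follows exactly the approach the paper indicates: the paper's proof consists solely of the remark that Lemma~\ref{lem41} together with Theorems~\ref{thm31}--\ref{thm33} yields the result, and you have simply spelled out the routine translation in each of the four cases. The case distinctions on the shape of $I=\log J$ and the sign constraints on the additive constant from Theorem~\ref{thm32} are handled correctly.
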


\subsection*{Acknowledgment}
This research was supported by University of Silesia Mathematics Department (Iterative Functional Equations and Real Analysis program).

\end{document}